\newtheorem{thm}{Theorem}[section]
\newtheorem{lem}[thm]{Lemma}
\newtheorem{pro}[thm]{Proposition}
\theoremstyle{definition}
\newtheorem{rem}[thm]{Remark}
\numberwithin{equation}{section}
\newcommand{\p}{\mathfrak{p}}
\newcommand{\X}{\mathbb{X}}
\newcommand{\ex}{\mathbb{E}}
\newcommand{\re}{\textup{Re}}
\newcommand{\im}{\textup{Im}}
\newcommand{\Psr}{\Psi_{\textup{rand}}}
\newcommand{\pr}{\mathbb{P}}
\newcommand{\M}{\mathcal{M}}
\newcommand{\N}{\mathcal{N}}
\newcommand{\Lv}{\mathbf{L}}
\newcommand{\me}{\textup{meas}}
\newcommand{\sums}{\sideset{}{^\flat}\sum}
\newcommand{\newabstract}[1]{%
  \par\bigskip
  \csname otherlanguage*\endcsname{#1}%
  \csname captions#1\endcsname
  \item[\hskip\labelsep\scshape\abstractname.]
}
\begin{document}

\baselineskip=17pt

\title[Zeros of the Epstein zeta function to the right of the critical line]{Zeros of the Epstein zeta function to the right of the critical line}

\author{Youness Lamzouri}
\address{Institut \'Elie Cartan de Lorraine, Universit\'e de Lorraine, BP 70239, 54506 Vandoeuvre-l\`es-Nancy Cedex, France; and Department of Mathematics and Statistics,
York University,
4700 Keele Street,
Toronto, ON,
M3J1P3
Canada}

\email{youness.lamzouri@univ-lorraine.fr}

\date{}

\begin{abstract} Let $E(s, Q)$ be the Epstein zeta function attached to a positive definite quadratic form of discriminant $D<0$, such that $h(D)\geq 2$, where $h(D)$ is the class number of the imaginary quadratic field $\mathbb{Q}(\sqrt{D})$. We denote by $N_E(\sigma_1, \sigma_2, T)$ the number of zeros of $E(s, Q)$ in the rectangle $\sigma_1 <\re(s)\leq \sigma_2$ and $T\leq \im(s)\leq 2T$, where $1/2<\sigma_1<\sigma_2<1$ are fixed real numbers. In this paper, we improve the asymptotic formula of Gonek and Lee for $N_E(\sigma_1, \sigma_2, T)$, obtaining a saving of a power of $\log T$ in the error term.

\end{abstract}

\subjclass[2010]{Primary 11E45, 11M41.}

\thanks{The author is partially supported by a Discovery Grant from the Natural Sciences and Engineering Research Council of Canada.}

\maketitle

\section{Introduction}

The Epstein zeta functions are zeta functions associated to quadratic forms, that were introduced by Epstein \cite{Ep} in the early 1900's as generalizations of the classical Riemann zeta function. These functions are interesting analytic objects, which also have applications in algebraic number theory and the theory of modular forms. In this paper, we will only be concerned about Epstein zeta functions attached to binary quadratic forms.
Let $Q(x, y)=ax^2+bxy+cy^2$ be a positive definite quadratic form with $a, b, c\in \mathbb{Z}$, $a>0$, and discriminant $D=b^2-ac<0$. The Epstein zeta function associated to $Q$ is defined for $\re(s)>1$ by 
$$ 
E(s, Q):= \sum_{\substack{m, n \in \mathbb{Z}\\ (m,n)\neq (0, 0)}} \frac{1}{Q(m, n)^s}.
$$
It extends to a meromorphic function on $\mathbb{C}$ with a simple pole at $s=1$, and satisfies the functional equation 
\begin{equation}\label{FunctionalEq}
\left(\frac{\sqrt{-D}}{2\pi}\right)^s\Gamma(s)E(s, Q)= \left(\frac{\sqrt{-D}}{2\pi}\right)^{1-s}\Gamma(1-s)E(1-s, Q).
\end{equation}
This follows from the relation between $E(s, Q)$ and the Eisenstein series $\widetilde{E}(z, s)$, defined for $z=x+iy\in \mathbb{H}$  (where $\mathbb{H}$ is the upper-half plane) and $\re(s)>1$ by 
$$ 
\widetilde{E}(z, s):= \sum_{\substack{m, n \in \mathbb{Z}\\ (m,n)\neq (0, 0)}}\frac{y^s}{|mz+n|^{2s}}.
$$ 
Indeed, one has 
$$E(s, Q)= \left(\frac{2}{\sqrt{-D}}\right)^s \widetilde{E}(\alpha_Q, s),$$
where $\alpha_Q=(-b+\sqrt{D})/(2a)$. The functional equation \eqref{FunctionalEq} is then obtained from the analogous functional equation for $\widetilde{E}(z,s)$, which is easily derived  since the Eisenstein series $\widetilde{E}(z,s)$ is a modular form. 

Epstein zeta functions are also interesting from an arithmetic point of view, since they are related to the Dedekind zeta function $\zeta_{K}(s)$ of the imaginary quadratic field $K=\mathbb{Q}(\sqrt{D}).$ Indeed, we have 
$$ \zeta_K(s)= \frac{1}{w_D}\sums_{Q}E(s, Q),$$
where the sum $\sums_{Q}$ runs over a full set of inequivalent quadratic forms of discriminant $D$, and $w_D$ is the number of roots of unity in $K=\mathbb{Q}(\sqrt{D})$, that is
$$ w_D= \begin{cases} 6 & \text{ if } D=-3,\\ 4 & \text{ if } D=-4,\\2 & \text{ if } D<-4.
\end{cases}
$$

The distribution of zeros of $E(s, Q)$ depends on the value of the class number $h(D)$ of the imaginary quadratic field $\mathbb{Q}(\sqrt{D})$. Indeed, if $h(D)=1$ (which occurs only when $D=-3, -4, -7, -8, -11, -19, -43, -47$ and $-163$), then $E(s, Q)=w_D \zeta_K(s)$. In particular, $E(s, Q)$ has an Euler product, and is expected to satisfy an analogue of the Riemann hypothesis. However, if $h(D)\ge 2$, the distribution of zeros of $E(s, Q)$ is completely different. In this case, Davenport and Heilbronn \cite{DaHe} proved that $E(s, Q)$ has infinitely many zeros in the half-plane $\re(s)>1$. The main reason for this difference is the fact that when $h(D)\geq 2$, $E(s, Q)$ is a linear combination of two or more inequivalent $L$-functions. More precisely, one has
$$E(s, Q)=\frac{w_D}{h(D)}\sum_{\chi} \overline{\chi}(\mathfrak{a}_Q)L_K(s, \chi),$$
where  $\sum_{\chi}$ is a sum over all characters of the class group of $K=\mathbb{Q}(\sqrt{D})$,  $\mathfrak{a}_Q$ is a representative of the ideal class corresponding to the equivalence class of $Q$, and $L_K(s, \chi)$ is the Hecke $L$-function attached to $\chi$, which is defined for $\re(s)>1$ by 
$$L_K(s, \chi)= \sum_{\mathfrak{n}}\frac{\chi(\mathfrak{n})}{N(\mathfrak{n})^s}=\prod_{\mathfrak{p}}\left(1-\frac{\chi(\mathfrak{p})}{N(\mathfrak{p})^s}\right)^{-1},
$$
where $\mathfrak{n}$ and $\mathfrak{p}$ denote integer and prime ideals of $K$ respectively, and $N(\mathfrak{m})$ is the norm of the ideal $\mathfrak{m}$. 
This follows since equivalence classes
of quadratic forms of discriminant $D$ are in one-to-one correspondence with ideal classes of $K$, and the number of representations of a number $n$ by a quadratic form is the number of integer ideals of norm $n$ in the corresponding ideal class, times the number $w_D$ of roots of unity in $K$. Moreover, it is known (see for example the discussion on page 303 of \cite{GoLe}) that if $\chi$ is complex, then $L_K(s, \chi)=L_K(s, \overline{\chi}).$ Let $J$ be the number of real characters plus one half the number of complex characters of the class group of $K$, and list these characters as $\chi_1, \dots, \chi_J$ where $\chi_j\neq \chi_k$ and $\chi_j\neq \overline{\chi_k}$, for all $1\leq j\neq k\leq J$. Hence, one can write
\begin{equation}\label{EHecke}
E(s, Q)=\sum_{j=1}^J a_j L_j(s),
\end{equation}
where $L_j(s):=L_K(s, \chi_j)$ for $1\leq j\leq J$ are inequivalent Hecke $L$-functions, and 
$$
a_j:=
\begin{cases}
w_D\chi_j(\mathfrak{a}_Q)/h(D) & \text{ if } \chi_j \text{ is real},\\
2w_D\re(\chi_j(\mathfrak{a}_Q))/h(D) & \text{ if } \chi_j \text{ is complex}.\\
\end{cases}
$$

When $h(D)\geq 2$, it was conjectured by Montgomery that almost all complex zeros of $E(s, Q)$ lie on the critical line $\re(s)=1/2$. This conjecture was proved by Bombieri and Hejhal \cite{BoHe} conditionally on the Generalized Riemann Hypothesis and a weak version of a pair correlation conjecture. 

For $\sigma_1<\sigma_2$ let 
$$N_E(\sigma_1, \sigma_2, T)= \left|\left\{\rho=\beta+i\gamma, \textup{ such that } E(\rho, Q)=0,  \sigma_1<\beta \leq \sigma_2, \text{ and } T\leq \gamma \leq 2T\right\}\right|.$$
Using a universality result for Hecke $L$-functions, Voronin \cite{Vo} proved that if $h(D)\ge 2$ then for $1/2<\sigma_1<\sigma_2<1$ fixed, we have 
\begin{equation}\label{Voronin}
 N_E(\sigma_1, \sigma_2, T)\gg T,
\end{equation}
where the implicit constant depends on $\sigma_1$ and $\sigma_2$. Lee \cite{Le1} improved this result to an asymptotic formula
\begin{equation}\label{AsympEpstein}
N_E(\sigma_1, \sigma_2, T)\sim c_E(\sigma_1, \sigma_2) T,
\end{equation}
where $c_E(\sigma_1, \sigma_2)> 0$ for  $1/2<\sigma_1<\sigma_2<1$. More recently, building on the work of Lamzouri, Lester and Radziwill \cite{LLR} for the distribution of $a$-points of the Riemann zeta function, Gonek and Lee \cite{GoLe} obtained a non-trivial upper bound for the error term in  \eqref{AsympEpstein}. More precisely, they showed that if $h(D)\ge 2$ and $1/2<\sigma_1<\sigma_2<1$ are fixed, then we have 
\begin{equation}\label{GonekLee}
N_E(\sigma_1, \sigma_2, T)= c_E(\sigma_1, \sigma_2) T+ O\left(T\exp\left(-b\sqrt{\log\log T}\right)\right),
\end{equation}
for some absolute constant $b$. Using the same method, Lee \cite{Le2} improved this asymptotic formula, obtaining a saving of a power of $\log T$ in the error term, in the special case where $E(s, Q)$ is a linear combination of exactly two inequivalent $L$-functions, which corresponds to $h(D)=2$ or $h(D)=3$. More precisely, he showed that in this case 
$$ 
N_E(\sigma_1, \sigma_2, T)= c_E(\sigma_1, \sigma_2) T + O\left(T\frac{\log\log T}{(\log T)^{\sigma_1/2}}\right).
$$
However, when $h(D)>3$, $E(s, Q)$ is a linear combination of three or more inequivalent $L$-functions, and in this case, the method of Gonek and Lee only yields the weaker error term $O(T\exp(-b\sqrt{\log\log T}))$. 

In this note, we use a different and more streamlined method to improve the error term in the asymptotic formula \eqref{GonekLee}. Our approach relies on a geometric box covering argument in $\mathbb{R}^{2J}$, and gives a saving of a power of $\log T$ in the error term of \eqref{GonekLee} when $h(D)>3$.
\begin{thm}\label{Main}
Let $Q(x, y)=ax^2+bxy+cy^2$ be a positive definite quadratic form with $a, b, c\in \mathbb{Z}$, $a>0$,  and discriminant $D=b^2-ac<0$, such that $h(D)\ge 2$.  Let $1/2<\sigma_1<\sigma_2<1$ be fixed. Then, we have 
\begin{equation}\label{MainEst}
N_E(\sigma_1, \sigma_2, T)= c_E(\sigma_1, \sigma_2) T +O\left(\frac{T}{(\log T)^{\alpha+o(1)}}\right),
\end{equation}
where $\alpha=\sigma_1/(4J+2).$
\end{thm}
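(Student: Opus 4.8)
The plan is to reduce the zero count to a change-of-argument, model the vector of Hecke $L$-functions probabilistically, and optimise a box decomposition of $\mathbb{R}^{2J}$ against a discrepancy estimate in order to extract the power of $\log T$. First I would pass from zeros to winding numbers. Applying the argument principle to the rectangle $R=[\sigma_1,\sigma_2]\times[T,2T]$ and bounding the two horizontal sides by $O(\log T)$ — via the standard Jensen-type estimate for the number of sign changes of $\re E$ on a horizontal segment — gives
$$N_E(\sigma_1,\sigma_2,T)=\frac{1}{2\pi}\big(V(\sigma_2)-V(\sigma_1)\big)+O(\log T),\qquad V(\sigma):=\int_T^{2T}\frac{\partial}{\partial t}\arg E(\sigma+it)\,dt.$$
Since $\sigma$ is fixed in $(1/2,1)$ there is no secular linear growth of $\arg E$, so each $V(\sigma)$ is of order $T$ and the difference detects exactly the zeros in the strip. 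I would read $V(\sigma)/2\pi$ as the net number of crossings of a generic ray $\{\arg z=\theta_0\}$ by the curve $t\mapsto E(\sigma+it)$; writing $E=\sum_j a_jL_j$, such a crossing occurs precisely when the vector $\mathbf L(\sigma+it):=(L_1(\sigma+it),\dots,L_J(\sigma+it))\in\mathbb C^J\cong\mathbb R^{2J}$ meets the real-codimension-one slab $\{\mathbf z:\arg(\sum_ja_jz_j)=\theta_0\}$. Averaging over $\theta_0$ (legitimate, as the winding number is independent of the ray) expresses $V(\sigma)$ as an integral of crossing counts against the occupation measure of $\mathbf L(\sigma+it)$.

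Next I would introduce the random model. For independent $\mathbb X=(X(\mathfrak p))_{\mathfrak p}$ uniform on the unit circle, set $L_j(\sigma,\mathbb X)$ equal to the random Euler product and let $\mu_\sigma$ be the law of $\mathbf L(\sigma,\mathbb X)\in\mathbb R^{2J}$. The central analytic input is a quantitative equidistribution statement: the occupation measure of $\{\mathbf L(\sigma+it):T\le t\le 2T\}$ approximates $\mu_\sigma$ with an explicit discrepancy $\mathcal D_\sigma(T)$. I would establish this by approximating each $\log L_j(\sigma+it)$ by a Dirichlet polynomial over prime ideals of norm at most $y=(\log T)^{B}$, computing joint moments in $t$ over $[T,2T]$, and matching them to those of the model through the orthogonality of the phases $N(\mathfrak p)^{-it}$; the off-diagonal terms are negligible as long as the polynomial length stays below a small power of $T$, which permits matching moments up to order $\asymp\log T/\log\log T$. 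Feeding these estimates into a $2J$-dimensional Erd\H{o}s--Tur\'an--Koksma inequality then produces a bound of the form $\mathcal D_\sigma(T)\ll(\log T)^{-\kappa}$.

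Finally, the main term and the error both come from a geometric box-covering of $\mathbb R^{2J}$: partition the value space into boxes of side $\delta$. On boxes away from the degenerate locus the crossing density is smooth, and integrating it against $\mu_\sigma$ — then taking the difference $V(\sigma_2)-V(\sigma_1)$ — yields the main term $c_E(\sigma_1,\sigma_2)T$. The deviation from this prediction is controlled box-by-box: only the $\ll\delta^{-(2J-1)}$ boxes meeting the slab, together with boxes on which $\mathbf L$ is atypically large, contribute, each with an error governed by $\mathcal D_\sigma(T)$ and by the oscillation of the crossing density across a box. Balancing the discretisation error (a power of $1/\delta$ times $\mathcal D_\sigma(T)$) against the within-box smoothing error (a positive power of $\delta$) optimises at a power of $\log T$: the Erd\H{o}s--Tur\'an--Koksma step in dimension $2J$ contributes the exponent $1/(2J+1)$, a second-moment (Cauchy--Schwarz) estimate for the small values of $E$ contributes a further factor $1/2$ together with the numerator, and the worst case $\sigma=\sigma_1$ fixes $\alpha=\sigma_1/(4J+2)$.

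The step I expect to be the main obstacle is reconciling the box-covering with the unboundedness of the crossing density near the zero locus: crossings of $\arg E=\theta_0$ accumulate exactly where $E$ is small, i.e. where $\mathbf L$ lies near the complex hyperplane $\{\sum_ja_jz_j=0\}$, so the counting function is not bounded and a naive application of the discrepancy bound fails. Overcoming this requires an anti-concentration estimate showing that $|\sum_ja_jL_j(\sigma+it)|$ is rarely too small (again obtained from moments of the model), and then arranging the decomposition so that the exceptional boxes near the hyperplane contribute less than the power saving. Making the dimension dependence sharp here — so that the exceptional contribution, the Erd\H{o}s--Tur\'an error and the discretisation all balance at the same power — is what pins down the denominator $4J+2$ and is the most delicate part of the argument.
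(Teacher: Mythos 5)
Your reduction via the argument principle is fine as far as it goes, but the central step of your plan --- reading the winding $V(\sigma)$ ``as an integral of crossing counts against the occupation measure of $\mathbf{L}(\sigma+it)$'' and then invoking a discrepancy bound for that occupation measure --- has a genuine gap. The number of signed crossings of a ray by the curve $t\mapsto E(\sigma+it,Q)$ is not a functional of the occupation measure: two curves can spend exactly the same amount of time in every region of $\mathbb{R}^{2J}$ while winding around the hyperplane $\{\mathbf{z}:\sum_j a_j z_j=0\}$ completely different numbers of times (a loop traversed once slowly versus many times quickly). Crossing counts are governed by a Kac--Rice/co-area type formula, which necessarily involves the velocity $\mathbf{L}'(\sigma+it)$; to make your scheme quantitative you would need a discrepancy bound for the joint distribution of $(\mathbf{L},\mathbf{L}')$ in $\mathbb{R}^{4J}$, together with integrability of the resulting crossing intensity near the singular locus, and neither is supplied by the position-only equidistribution you propose (which is Theorem \ref{Discrepancy} of the paper). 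Moreover, the blow-up of $\im\,(E'/E)$ at zeros close to the line $\re(s)=\sigma$ means your anti-concentration fix is not enough: bounding the time spent near the hyperplane does not bound the number of windings around it.

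The paper circumvents exactly this difficulty by never touching $\arg E$ on vertical lines. Littlewood's lemma converts the zero count into $\frac{1}{2\pi}\int_T^{2T}\log|E(\sigma+it,Q)|\,dt$, a functional of position alone, so the $2J$-dimensional discrepancy plus a hypercube covering of the sets where $|E|$ is small (with anti-concentration coming from absolute continuity of $E(\sigma,\X)$, the input you correctly anticipate) gives the mean value with error $(\log T)^{-\sigma/(2J+1)+o(1)}$; this is Theorem \ref{AsympLogE}. The price is that Littlewood's lemma yields $\int_{\sigma}^{\sigma_0}N_E(u,\sigma_0,T)\,du$ rather than $N_E$ itself, and one recovers $N_E$ by numerically differentiating $\M(\sigma)=\ex(\log|E(\sigma,\X)|)$ with step $h=(\log T)^{-\sigma/(4J+2)}$, balancing $hT$ against $T/(h(\log T)^{\sigma/(2J+1)})$. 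That differentiation step --- not a ``second-moment/Cauchy--Schwarz estimate for small values'' as you suggest --- is the source of the extra factor $2$ in the denominator $4J+2$. This also exposes that your exponent bookkeeping is reverse-engineered rather than derived: if your direct winding-number argument worked, it would bypass the differentiation entirely and produce the stronger exponent $\sigma_1/(2J+1)$, which should itself have been a warning sign.
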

\begin{rem} The proof of Theorem \ref{Main} gives a quantitative estimate for the term $(\log T)^{o(1)}$ in the RHS of \eqref{MainEst}. More precisely, it follows that the error term in the asymptotic formula \eqref{MainEst} is $\ll T \exp(b\sqrt{\log\log T})/(\log T)^{\alpha}$, for some constant $b=b(\sigma_1, \sigma_2)>0$.
\end{rem}

\section{Strategy of proof of Theorem \ref{Main} and key ingredients}

Let $1/2<\sigma_1<\sigma_2<1$ be fixed real numbers, and $T$ be large. To count the number of zeros of $E(s, Q)$ in the rectangle $\sigma_1<\re(s)\leq \sigma_2$, $T\leq \im(s)\leq 2T$ we shall use Littlewood's lemma in a standard way. Let $\rho_Q= \beta_Q+ i \gamma_Q$ denote a zero of $E(s, Q)$. It is known that there exists  $\sigma_0$ such that $\beta_Q<\sigma_0$ for all zeros $\rho_Q$ of $E(s, Q)$. By Littlewood's lemma
(see equation (9.9.1) of Titchmarsh \cite{Ti}), we have 
\begin{equation}\label{Littlewood}
\begin{aligned}
\int_{\sigma}^{\sigma_0}\bigg(\sum_{\substack{ \beta_Q >u \\ T \leq \gamma_Q \leq 2T}} 1\bigg) du
& = \frac{1}{2\pi} \int_{T}^{2T} \log |E(\sigma+it, Q)| dt - \frac{1}{2\pi} \int_{T}^{2T} \log |E(\sigma_0 + it, Q)| dt\\
& \ \ +O_Q(\log T).
\end{aligned}
\end{equation}
In order to estimate the integrals on the right hand side of this asymptotic formula, we shall construct a probabilistic random model for $E(\sigma+it, Q)$. This was also used in  \cite{GoLe}, \cite{Le1} and \cite{Le2}. Recall from \eqref{EHecke} that 
$$ E(\sigma+it, Q)= \sum_{j=1}^J a_j L_j(\sigma+it).$$ 
Let $\{\X(p)\}_{p}$ be a sequence of independent random variables, indexed by the prime numbers, and uniformly distributed on the unit circle. For $1\leq j\leq J$ we consider the random Euler products 
$$
L_j(\sigma, \X):=\prod_{\mathfrak{p}} \left(1-\frac{\chi_j(\mathfrak{p})\X(p)}{N(\mathfrak{p})^{\sigma}}\right)^{-1},
$$
where $p$ is the unique rational prime dividing $N(\mathfrak{p})$. These random products converge almost surely for $\sigma>1/2$ by Kolmogorov's three series Theorem. We shall prove that $\frac{1}{T} \int_{T}^{2T}\log |E(\sigma+it, Q)|dt$ is very close to the expectation (which we shall denote throughout by $\ex(\cdot)$) of $\log |E(\sigma, \X)|$, where the probabilistic random model $E(\sigma, \X)$ is defined by 
$$ E(\sigma, \X):= \sum_{j=1}^J a_j L_j(\sigma, \X).$$
\begin{thm}\label{AsympLogE} Let $\sigma>1/2$ be fixed. There exists a constant $b=b(\sigma)>0$ such that 
$$ 
\frac{1}{T} \int_{T}^{2T}\log |E(\sigma+it, Q)|dt = \ex\left(\log |E(\sigma, \X)|\right)+ O\left(\frac{e^{b\sqrt{\log\log T}}}{(\log T)^{\sigma/(2J+1)}}\right).
$$
\end{thm}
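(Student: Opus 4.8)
I propose to prove Theorem~\ref{AsympLogE} by comparing the two averages through the joint distribution of the vector of Hecke $L$-functions $\big(L_1(\sigma+it),\dots,L_J(\sigma+it)\big)$ in $\mathbb{C}^J\cong\mathbb{R}^{2J}$, and then pushing this distribution forward through the test function
$$
F(z_1,\dots,z_J):=\log\Big|\sum_{j=1}^{J}a_j z_j\Big|.
$$
Since $E(s,Q)$ is a linear combination of several $L$-functions and has no Euler product, one cannot reduce $\log|E|$ to a single Dirichlet series. The natural route is therefore to control the joint law of the individual $L_j$, each of which \emph{does} have an Euler product, and then to apply the (singular but locally integrable) map $F$; in this formulation the arithmetic quantity is $\frac1T\int_T^{2T}F\big(L_1(\sigma+it),\dots,L_J(\sigma+it)\big)\,dt$ and the probabilistic one is $\ex\,F\big(L_1(\sigma,\X),\dots,L_J(\sigma,\X)\big)$, both by \eqref{EHecke} and the definition of $E(\sigma,\X)$.

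The first step is truncation. Fixing a parameter $Y=(\log T)^{A}$, I would introduce the truncated logarithms of the Euler products
$$
\mathcal P_j(\sigma+it):=\sum_{N(\mathfrak p^\ell)\le Y}\frac{\chi_j(\mathfrak p)^\ell}{\ell\,N(\mathfrak p)^{\ell(\sigma+it)}},\qquad
\mathcal P_j(\sigma,\X):=\sum_{N(\mathfrak p^\ell)\le Y}\frac{\chi_j(\mathfrak p)^\ell\,\X(p)^\ell}{\ell\,N(\mathfrak p)^{\ell\sigma}}.
$$
On the random side the difference $\log L_j(\sigma,\X)-\mathcal P_j(\sigma,\X)$ has variance $\ll Y^{1-2\sigma}$ and is small with high probability; on the arithmetic side, a zero-density estimate for each $L_j$ shows that, outside an exceptional set of $t\in[T,2T]$ of small measure on which some $L_j$ has a zero within $O(1/\log T)$ of $\sigma+it$, one has $\log L_j(\sigma+it)=\mathcal P_j(\sigma+it)+o(1)$. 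The heart of the matter is then an orthogonality estimate: a product of $k$ factors $\mathcal P_j$ is a Dirichlet polynomial supported on integers up to $Y^{k}$, so the classical mean value
$$
\frac{1}{T}\int_T^{2T}\Big(\frac{m}{n}\Big)^{it}\,dt=\delta_{m=n}+O\Big(\frac{1}{T\,|\log(m/n)|}\Big)
$$
forces all off-diagonal contributions to be $\ll Y^{2k}/T$, negligible as long as $k\log Y=o(\log T)$, while the diagonal reproduces exactly the independence built into $\{\X(p)\}_p$. This yields agreement of all joint moments of $(\mathcal P_1,\dots,\mathcal P_J)$ up to order $\asymp \log T/\log Y$, hence closeness of the two joint distributions on $\mathbb{R}^{2J}$, with an accuracy I will track as a negative power of $\log T$.

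Next I would transfer this distributional closeness to the integral of $F$ by the geometric box-covering argument announced in the introduction. Partition the relevant range in $\mathbb{R}^{2J}$ into boxes of side $\delta$, and on each box replace $F$ by a constant, comparing $\frac1T\,\me\{t\in[T,2T]:(L_j(\sigma+it))\in\text{box}\}$ with $\pr\big((L_j(\sigma,\X))\in\text{box}\big)$ via the moment matching above. Since the number of boxes is $\asymp\delta^{-2J}$, the accumulated matching error is $\asymp\delta^{-2J}\cdot(\text{per-box error})$, whereas replacing $F$ by a step function costs $\asymp\delta$ away from the singular locus. Balancing $\delta^{-2J}\,(\log T)^{-\sigma}\asymp\delta$ gives $\delta\asymp(\log T)^{-\sigma/(2J+1)}$, which is precisely the source of the exponent $\sigma/(2J+1)$ in the stated error term.

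The main obstacle is the logarithmic singularity of $F$ along the variety $\big\{\sum_j a_j z_j=0\big\}$, corresponding to the genuine zeros of $E(s,Q)$ to the right of the critical line, near which $\log|E(\sigma+it)|\to-\infty$. I would control this in two complementary ways. Geometrically, this locus has real codimension $2$ in $\mathbb{R}^{2J}$, so $F$ is locally integrable and the contribution of boxes meeting it is bounded by $\delta^{2}\log(1/\delta)$, which is absorbed into the error above. Arithmetically, to bound the true small values I would combine a second-moment estimate $\frac1T\int_T^{2T}\big(\log|E(\sigma+it,Q)|\big)^2\,dt\ll 1$ with a repulsion bound for $\me\{t\in[T,2T]:|E(\sigma+it,Q)|<\varepsilon\}$, so that the set on which $|E|$ is atypically small contributes negligibly to the average. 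I expect this small-values analysis, together with the Gaussian-type large-deviation tails of $\log|E|$ that survive the truncation, to be the genuinely delicate part; it is these tails that produce the secondary factor $e^{b\sqrt{\log\log T}}$ recorded in the error term.
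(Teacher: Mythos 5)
Your core mechanism is the one the paper actually uses: cover the relevant region of $\mathbb{R}^{2J}$ by boxes of side $\delta$, apply a per-box discrepancy bound $(\log T)^{-\sigma}$ to $\asymp\delta^{-2J}$ boxes, and balance $\delta^{-2J}(\log T)^{-\sigma}$ against the $O(\delta)$ cost of the box approximation, forcing $\delta=(\log T)^{-\sigma/(2J+1)}$; this is precisely Proposition \ref{CoveringResult}, except that the paper runs the covering on the sub- and super-level sets of $|E|$ (the distribution function $\Psi_T(\tau)$, uniformly in $\tau$, then integrates in $\tau$) rather than on the test function $F$ directly --- an equivalent formulation. One presentational difference: the paper does not re-derive the per-box input but quotes Theorem \ref{Discrepancy} (essentially Theorem 1.2 of Gonek--Lee), whereas you propose to reprove it via truncated Euler products and moment matching. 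That is the right provenance, but be aware that passing from matching joint moments to a discrepancy bound \emph{uniform over all boxes} is a genuine multidimensional Fourier-analytic argument (an Esseen-type inequality in $2J$ variables); your per-box error $(\log T)^{-\sigma}$ is asserted, not derived, and the approximation $\log L_j(\sigma+it)=\mathcal{P}_j(\sigma+it)+o(1)$ is too weak as stated --- a power saving in $\log T$ is needed there.

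The genuine gap is your treatment of the small values of $|E(\sigma+it,Q)|$. A second-moment bound is far too weak: truncating $\log|E|$ at height $-V$ and applying Chebyshev plus Cauchy--Schwarz bounds the discarded contribution only by $O(1/V)$, so you would need $V$ to be a power of $\log T$; but then you need a measure bound for $\{t\in[T,2T]:|E(\sigma+it,Q)|<\varepsilon\}$ decaying in $\varepsilon$ over a power of $\log T$ many scales. You invoke such a ``repulsion bound'' without proof, and none exists off the shelf: within this framework the only route to it is comparison with the random model via the absolute continuity of $E(\sigma,\X)$ (Borchsenius--Jessen, which your codimension-two argument implicitly uses and which controls only the random side), and that comparison carries an additive discrepancy error $\gg(\log T)^{-\sigma/(2J+1)}$ that does \emph{not} decay as $\varepsilon\to 0$. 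Running your scheme with that repulsion bound, the Cauchy--Schwarz step takes a square root of this measure and yields at best an error $(\log T)^{-\sigma/(4J+2)}$, i.e.\ half the claimed exponent. The paper avoids this loss by truncating at $-M^4$ with $M=A\sqrt{\log\log T}$ and killing the exceptional set with the \emph{high} moment bounds of Gonek--Lee, namely Lemmas \ref{MomentsLogE} and \ref{MomentsOneL} with $k\asymp\log\log T$ followed by H\"older with exponent $2r$, $r=\lfloor\log\log T\rfloor$ (so the discarded contribution is $\ll T(\log T)^{-A}$); inside the truncated range no arithmetic repulsion is ever needed, since the absolute continuity of the random model enters only through the covering of the sub-level sets (Case 2 of Proposition \ref{CoveringResult}). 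To repair your proof, replace the second moment by these $2k$-th moment estimates (and Lemma \ref{MomentsLogR} on the random side); with that substitution your outline matches the paper's argument.
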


Gonek and Lee \cite{GoLe} obtained such an asymptotic formula, but with the weaker error term $O(\exp(-b\sqrt{\log\log T}))$. 

We now show how to deduce Theorem \ref{Main} from Theorem \ref{AsympLogE} and \eqref{Littlewood}. The proof also provides an explicit description of the constant $c_E(\sigma_1, \sigma_2)$ in terms of the probabilistic random model $E(\sigma, \X)$. 
\begin{proof}[Proof of Theorem \ref{Main}]
Let 
$$ \M(\sigma)=\ex\left(\log |E(\sigma, \X)|\right). $$
Lee \cite{Le1} proved that $\M(\sigma)$ is twice differentiable as a function of $\sigma$. Let $h>0$ be small. Combining Theorem \ref{AsympLogE} with the estimate \eqref{Littlewood} at $\sigma$ and $\sigma+h$, we obtain
$$
\int_{\sigma}^{\sigma+h}\bigg(\sum_{\substack{ \beta_Q >u \\ T \leq \gamma_Q \leq 2T}} 1\bigg)  du =
\frac{T}{2\pi} (\M(\sigma)-\M(\sigma+h))+ O \left(\frac{Te^{b\sqrt{\log\log T}}}{(\log T)^{\sigma/(2J+1)}}\right).
$$
Dividing by $h$ both sides, and using that $\M(\sigma)$ is twice differentiable gives 
\begin{align*}
\frac{1}{h} \int_{\sigma}^{\sigma + h} \bigg(\sum_{\substack{ \beta_Q >u \\ T \leq \gamma_Q \leq 2T}} 1\bigg) du & = \frac{T}{2\pi} \cdot \bigg ( \frac{\M(\sigma) - \M(\sigma + h)}{h} \bigg ) + O \bigg (\frac{Te^{b\sqrt{\log\log T}}}{h(\log T)^{\sigma/(2J+1)}} \bigg ) \\
& = - \frac{T}{2\pi} \cdot \M'(\sigma) + O \bigg ( h T + \frac{Te^{b\sqrt{\log\log T}}}{h(\log T)^{\sigma/(2J+1)}} \bigg ).
\end{align*}
Therefore,
$$
\sum_{\substack{\beta_Q > \sigma + h \\ T \leq \gamma_Q \leq 2T}} 1 \leq
- \frac{T}{2\pi} \cdot \M'(\sigma) +  O \bigg ( h T + \frac{Te^{b\sqrt{\log\log T}}}{h(\log T)^{\sigma/(2J+1)}}  \bigg )
\leq \sum_{\substack{\beta_Q > \sigma \\ T \leq \gamma_Q \leq 2T}} 1.
$$
We substitute $\sigma - h$ for $\sigma$, and use that $\M'(\sigma - h) = \M'(\sigma) + O(h)$ (since $\M'(\sigma)$ is differentiable) to get
$$
\sum_{\substack{\beta_Q >\sigma \\ T \leq \gamma_{a} \leq 2T}} 1
\leq - \frac{ T}{2\pi} \cdot \M'(\sigma) + O\left(hT + \frac{Te^{b\sqrt{\log\log T}}}{h(\log T)^{\sigma/(2J+1)}}\right).
$$
We pick $h = (\log T)^{-\sigma/(4J+2)}$ to conclude that
$$
\sum_{\substack{\beta_Q > \sigma \\ T \leq \gamma_Q \leq 2T}} 1 = 
- \frac{T}{2\pi} \cdot \M'(\sigma) + O \bigg(\frac{Te^{b\sqrt{\log\log T}}}{(\log T)^{\sigma/(4J+2)}} \bigg ).
$$
Thus, using this estimate with $\sigma=\sigma_1$ and $\sigma=\sigma_2$ gives
$$ N_E(\sigma_1, \sigma_2, T)= c_E(\sigma_1, \sigma_2) T +O \bigg(\frac{Te^{b\sqrt{\log\log T}}}{(\log T)^{\sigma_1/(4J+2)}} \bigg ),$$ 
where $$c_E(\sigma_1, \sigma_2)=\frac{\M'(\sigma_2)-\M'(\sigma_1)}{2\pi}.$$

\end{proof}

Our proof of Theorem \ref{AsympLogE} (which will be given in the next section) uses a different approach, but relies on the same key ingredients as in \cite{GoLe}. The first is a discrepancy bound for the joint distribution of the Hecke $L$-functions $L_j(s)$. For $\sigma>1/2$ let $$ \Lv(\sigma+it)=\Big(\log |L_1(\sigma+it)|, \dots, \log |L_J(\sigma+it)|, \arg L_1(\sigma+it), \dots, \arg L_J(\sigma+it) \Big),$$
and similarly define the random vector
$$ \Lv(\sigma, \X)=\Big(\log |L_1(\sigma, \X)|, \dots, \log |L_J(\sigma, \X)|, \arg L_1(\sigma, \X), \dots, \arg L_J(\sigma, \X) \Big).$$
Then we have the following result, which is essentially proved by Gonek and Lee \cite{GoLe}, and is a generalization of Theorem 1.1 of \cite{LLR}. Its proof is a slight modification of  the proof of Theorem 1.2 of \cite{GoLe}, so we omit it. Here and throughout we let ``$\text{meas}$'' denotes the Lebesgue measure on $\mathbb{R}$.
\begin{thm}\label{Discrepancy}
Let $1/2<\sigma<1$ be fixed. The we have 
$$ \sup_{\mathcal{B}} \left| \frac{1}{T} \me \big \{ t\in [T, 2T]: \Lv(\sigma+it) \in \mathcal B \big\}-\pr\left(\Lv(\sigma, \X)\in \mathcal{B}\right)\right|\ll \frac{1}{(\log T)^{\sigma}},$$
where the supremum is taken over all rectangular boxes (possibly unbounded)  $ \mathcal{B}\subset \mathbb{R}^{2J}$, with sides parallel to the coordinate axes. 
\end{thm}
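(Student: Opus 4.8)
The plan is to reduce the supremum over boxes to a comparison of the joint characteristic functions of the two distributions, and then to match these by approximating each $\log L_j$ by a short Dirichlet polynomial over prime ideals. For $\xi=(\xi_1,\dots,\xi_{2J})\in\mathbb{R}^{2J}$ set
$$
\Phi_T(\xi)=\frac1T\int_T^{2T}\exp\!\big(i\langle \xi,\Lv(\sigma+it)\rangle\big)\,dt,
\qquad
\Phi_{\X}(\xi)=\ex\exp\!\big(i\langle \xi,\Lv(\sigma,\X)\rangle\big).
$$
A $2J$-dimensional Beurling--Selberg majorant/minorant construction, as in Section~2 of \cite{LLR}, yields a quantitative Erd\H{o}s--Tur\'an inequality bounding the left-hand side of the theorem by a weighted sum of $|\Phi_T(\xi)-\Phi_{\X}(\xi)|$ over a lattice of frequencies $\xi$ with $\|\xi\|_\infty\le M$, plus a main term $\ll 1/M$, where $M$ is a parameter to be chosen as a small power of $\log T$. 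Thus it suffices to show that $\Phi_T(\xi)$ and $\Phi_{\X}(\xi)$ agree to within an acceptable error for all such $\xi$.

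Next I would establish the Dirichlet polynomial approximation. For $\sigma>1/2$ fixed and $X$ a suitable power of $T$, one has, for all $t\in[T,2T]$ outside an exceptional set $\mathcal{S}$ with $\me(\mathcal{S})\ll T/(\log T)^{\sigma}$,
$$
\log L_j(\sigma+it)=\sum_{N(\p)\le X}\frac{\chi_j(\p)}{N(\p)^{\sigma+it}}+\text{(small)},
$$
with the analogous truncation for the random vector $\Lv(\sigma,\X)$. The control on $\me(\mathcal{S})$ is supplied by zero-density estimates for the Hecke $L$-functions $L_j$ in the strip $1/2<\re(s)<1$ together with mean-value bounds for the corresponding prime-ideal Dirichlet polynomials; this is precisely the point where one uses that we work to the right of the critical line. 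Consequently both $\Phi_T$ and $\Phi_{\X}$ are, up to negligible errors, the characteristic functions of explicit real-linear combinations of the truncated prime-ideal sums.

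The heart of the argument is then the moment matching. Expanding the exponentials and using the orthogonality relation
$$
\frac1T\int_T^{2T}\Big(\frac{N(\p_1)\cdots N(\p_r)}{N(\mathfrak{q}_1)\cdots N(\mathfrak{q}_s)}\Big)^{-it}\,dt,
$$
which is $\ll 1$ only when the two products of norms coincide and is otherwise $O(X^{O(1)}/T)$, one sees that the $t$-moments of the truncated sums reproduce the moments of the random model, the latter being governed by the independence and uniform distribution of the $\X(p)$ on the unit circle. Here the distinctness of the characters $\chi_1,\dots,\chi_J$ (and the fact that $\chi_j\ne\overline{\chi_k}$ for $j\ne k$) guarantees that the cross-correlations between the $J$ different components decouple correctly, so that the \emph{joint} characteristic functions match. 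Provided $X^{K}\le T^{1-\varepsilon}$, where $K$ is the number of matched moments, this gives $|\Phi_T(\xi)-\Phi_{\X}(\xi)|$ small for $\|\xi\|_\infty\le M$; balancing $X$, $K$ and $M$ then produces the stated bound $\ll(\log T)^{-\sigma}$.

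I expect the main obstacle to be the simultaneous control of the exceptional set $\mathcal{S}$ for all $J$ functions at once, i.e.\ showing that the prime-ideal Dirichlet polynomial approximates $\log L_j(\sigma+it)$ for every $j$ on a set of nearly full measure, with an error small enough not to erode the $(\log T)^{-\sigma}$ saving; this rests on the zero-density input for Hecke $L$-functions and on bounding the measure of the set of $t$ where some $L_j$ is abnormally large or small. A secondary technical point is carrying the Beurling--Selberg machinery in dimension $2J$ and handling the possibly unbounded boxes $\mathcal{B}$, which forces one to treat the tails of both distributions separately, again via moment bounds for the prime-ideal sums. Since all of these ingredients are already present in \cite{GoLe} (and, for $J=1$, in \cite{LLR}), the argument reduces to assembling their joint version, which is why its proof may reasonably be omitted.
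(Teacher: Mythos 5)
Your proposal is correct and takes essentially the same route as the paper, which in fact omits its own proof and cites the proof of Theorem 1.2 of \cite{GoLe} (a generalization of Theorem 1.1 of \cite{LLR}); that proof proceeds exactly as you outline---approximation of each $\log L_j(\sigma+it)$ by a short prime-ideal Dirichlet polynomial off a small exceptional set controlled by zero-density estimates for the Hecke $L$-functions, matching of the joint characteristic functions via mean values of Dirichlet polynomials with the length constraint $X^{K}\le T^{1-\varepsilon}$, and a $2J$-dimensional Beurling--Selberg/Erd\H{o}s--Tur\'an reduction from boxes to a bounded range of frequencies, with unbounded boxes handled by tail bounds.
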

We shall use this result to approximate the integral $\frac 1T\int_T^{2T} \log |E(\sigma+it, Q)|dt$ by the expectation $\ex(\log|E(\sigma, \X)|)$. However, in doing so we need to control the large values and the logarithmic singularities of both $\log|E(\sigma+it, Q)|$ and $\log |E(\sigma, \X)|$. To this end we use the following lemmas, which are proved in \cite{GoLe}.

\begin{lem}[Lemma 3.1 of \cite{GoLe}]\label{MomentsLogE} Let $1/2<\sigma\leq 2$ be fixed. There exists a constant $C_1>0$ depending at most on $J$, such that for every positive integer $k$ we have 
$$ \frac 1T \int_T^{2T} \big|\log|E(\sigma+it, Q)|\big|^{2k} dt \ll (C_1k)^{4k}.$$
\end{lem}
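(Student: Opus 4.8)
The plan is to split the integrand according to the sign of $\log|E(\sigma+it,Q)|$, writing $\big|\log|E|\big|^{2k} = (\log^{+}|E|)^{2k} + (\log^{+}(1/|E|))^{2k}$ pointwise, the two parts having disjoint supports. The first, ``large values'' part is routine; the second, ``small values'' part, which captures the logarithmic singularities of $\log|E|$ at the zeros of $E(s,Q)$, is the heart of the matter and is where the exponent $4k$ (rather than $2k$) originates.

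For the large-values part I would bound $|E(\sigma+it,Q)|\le\sum_{j=1}^{J}|a_j|\,|L_j(\sigma+it)|$ and reduce, via the elementary inequality $(\log^{+}x)^{2k}\le(2k/\beta)^{2k}x^{\beta}$ (valid for any $\beta>0$ and $x\ge1$) together with H\"older, to moments of the individual Hecke $L$-functions. For $\sigma>1/2$ fixed one has a mean-value bound of the shape $\frac1T\int_T^{2T}|L_j(\sigma+it)|^{2m}\,dt\ll e^{Cm^2}$ for $m$ up to a small power of $\log T$, obtained from an approximate functional equation / Dirichlet polynomial representation of $L_j$ and a mean-value theorem for Dirichlet polynomials. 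Choosing $\beta\asymp\sqrt k$ and optimising yields $\frac1T\int_T^{2T}(\log^{+}|E|)^{2k}\,dt\ll(Ck)^{k}$ in the range $k\ll\log T$; for larger $k$ the crude bound $|E(\sigma+it,Q)|\ll T^{A}$ gives $(\log^{+}|E|)^{2k}\ll(A\log T)^{2k}\le(Ak)^{2k}$. In either regime this part is $\ll(C_1k)^{4k}$.

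The small-values part is the main obstacle. Writing it through the layer-cake formula, it suffices to understand $M(V):=\me\{t\in[T,2T]:|E(\sigma+it,Q)|\le e^{-V}\}$, since $\frac1T\int_T^{2T}(\log^{+}(1/|E|))^{2k}\,dt=\frac{2k}{T}\int_0^{\infty}V^{2k-1}M(V)\,dV$. I would estimate $M(V)$ in two ranges. For $1\le V\ll\log T$ I would use the joint value-distribution of the constituent $L$-functions: by Theorem \ref{Discrepancy} the distribution of $\Lv(\sigma+it)$ is close to that of the random vector $\Lv(\sigma,\X)$, so $M(V)/T$ is governed by the small-ball probability $\pr(|E(\sigma,\X)|\le e^{-V})$ of the random model $E(\sigma,\X)=\sum_j a_jL_j(\sigma,\X)$, up to the discrepancy error. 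The point is that $E(\sigma,\X)$ is a genuine, non-degenerate combination of the jointly distributed random Euler products $L_j(\sigma,\X)$, so it does not concentrate near $0$; this would yield $M(V)\ll Te^{-cV}$ with \emph{no} spurious powers of $\log T$, hence a contribution $\ll(Ck)^{2k}$. For the remaining range $V\gg\log T$ (equivalently, the regime $k\gg\log T$) I would argue by zero counting: using the partial-fraction expansion of $E'/E$ one bounds $\log^{+}(1/|E(\sigma+it,Q)|)$ by $\sum_{|\gamma-t|\le1}\log^{+}(1/|\sigma+it-\rho|)+O(\log T)$, where $\rho=\beta+i\gamma$ runs over zeros of $E(s,Q)$; the number of such zeros is $O(\log T)$ by a Riemann--von Mangoldt count for $E$, and $\int_T^{2T}(\log^{+}(1/|\sigma+it-\rho|))^{2k}\,dt\ll(2k)^{2k}$ for each $\rho$. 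A power-mean inequality over the $O(\log T)$ nearby zeros then costs a factor $(\log T)^{2k}$, which is \emph{precisely} absorbed into $k^{4k}$ once $\log T\ll k$; this is the structural reason the final exponent is $4k$ rather than $2k$.

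The hardest ingredient is the small-ball estimate for the random model in the first range, because $E(s,Q)$ has \emph{no} Euler product: one cannot expand $\log|E|$ as a prime sum, so the non-concentration of $E(\sigma,\X)$ near the origin must be extracted from the joint law of $(L_1,\dots,L_J)(\sigma,\X)$ and its non-degeneracy, rather than from a single Gaussian-type prime sum. Controlling this probability uniformly for $V\ll\log T$, and matching it against the discrepancy error in Theorem \ref{Discrepancy} so that the boxes used to approximate the region $\{|E|\le e^{-V}\}$ do not overwhelm the saving, is the delicate step; everything else reduces to standard mean-value and zero-counting estimates.
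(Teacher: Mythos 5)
The paper itself does not prove this lemma --- it is quoted as Lemma 3.1 of \cite{GoLe} --- so there is no internal proof to compare with; judged on its own terms, your proposal has a genuine gap, located exactly at the step you yourself call ``delicate''.

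Your layer-cake reduction is the right one (a bound $M(V)=\me\{t\in[T,2T]:|E(\sigma+it,Q)|\le e^{-V}\}\ll Te^{-c\sqrt V}$, uniform in $V$, would indeed give $(C_1k)^{4k}$), the $\log^+|E|$ part is routine, and your zero-counting argument is correct, but only as a bound on the \emph{full} moment in the regime $k\gg\log T$, since the power-mean step costs $(\log T)^{2k}$. The gap is the range of moderate $V$ for fixed $k$. For each fixed $k$ the lemma asserts a constant independent of $T$, whereas any transfer through Theorem \ref{Discrepancy} carries an \emph{additive} error which is a fixed negative power of $\log T$: since $\{(u_j,v_j):|\sum_j a_je^{u_j+iv_j}|\le e^{-V}\}$ is not a box, one must cover it by $\asymp(M/\varepsilon)^{2J}$ boxes of side $\varepsilon$, and optimizing $\varepsilon$ yields at best $M(V)/T\ll\pr\big(|E(\sigma,\X)|\le e^{-V}\big)+(\log T)^{-\sigma/(2J+1)+o(1)}$ --- precisely the quality of Proposition \ref{CoveringResult} (note also that Theorem \ref{Discrepancy} is stated only for $\sigma<1$, while the lemma is needed for $\sigma\le2$). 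This error does not decay in $V$, so the assertion ``$M(V)\ll Te^{-cV}$ with no spurious powers of $\log T$'' is exactly what the method cannot deliver: even if the covering cost nothing, integrating $V^{2k-1}$ against the theorem's own error $(\log T)^{-\sigma}$ over $1\le V\le\log T$ contributes $\asymp(\log T)^{2k-\sigma}$, which tends to infinity with $T$ for every fixed $k\ge1$. The proposed proof therefore already fails at $k=1$.

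Moreover, your two regimes do not meet: ``$V\gg\log T$'' and ``$k\gg\log T$'' are not equivalent, since for each fixed $k$ the $V$-integral runs over all $V$. If instead you use zero counting to bound $M(V)$ itself for fixed $k$, the deduction is that $|E(\sigma+it,Q)|\le e^{-V}$ forces a zero within distance $d(V)=Ce^{-V/(C\log T)}$ of $\sigma+it$; since $E(s,Q)$ has $\asymp T\log T$ zeros in the strip, and since no zero-density estimate of strength $T^{1-\delta}$ is available --- indeed it is false, as by the very theorem of this paper $E$ has $\asymp T$ zeros with real part $>\sigma_1>1/2$ --- this gives only $M(V)\ll T(\log T)e^{-V/(C\log T)}$, worse than trivial until $V\gg\log T\log\log T$ and of the required strength $Te^{-c\sqrt V}$ only once $V\gg(\log T)^2$. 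So for fixed $k$ the window $(\log T)^{\sigma/(4J+2)}\ll V\ll(\log T)^2$ is covered by neither of your tools, and that window is where the real content of the lemma lies: an unconditional small-value estimate for $|E(\sigma+it,Q)|$ finer than anything the discrepancy theorem can see. Note finally that the paper's architecture runs the other way: Lemmas \ref{MomentsLogE}--\ref{MomentsLogR} are a priori inputs used to truncate to the box $(-M,M)^{2J}$ \emph{before} the covering argument of Proposition \ref{CoveringResult} is applied, so deriving Lemma \ref{MomentsLogE} from Theorem \ref{Discrepancy} inverts the dependencies; even where no formal circularity results, the quantitative loss described above is fatal.
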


\begin{lem}[Lemma 3.2 of \cite{GoLe}]\label{MomentsOneL}
Let $1/2<\sigma\leq 2$ be fixed, and $1\leq j\leq J$. There exist positive constants $C_2, C_3$ depending on $\sigma$, such that for every positive integer $k\leq (\log T)/(C_3 \log\log T)$ we have 
$$\frac{1}{T}\int_T^{2T}|\log L_j(\sigma+it)|^{2k}dt \ll (C_2k)^{k}.$$
\end{lem}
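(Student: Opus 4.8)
The plan is to exploit the one feature that distinguishes an individual Hecke $L$-function $L_j$ from the linear combination $E(s,Q)$, namely the Euler product. For $\re(s)>1$ it gives
$$\log L_j(s)=\sum_{\mathfrak{p}}\sum_{m\ge 1}\frac{\chi_j(\mathfrak{p}^m)}{m\,N(\mathfrak{p}^m)^{s}},$$
a Dirichlet series supported on prime ideal powers. Since $\sigma>1/2$ is fixed, the associated ``variance'' $V:=\sum_{\mathfrak{p},m}m^{-2}N(\mathfrak{p}^m)^{-2\sigma}\le \zeta_K(2\sigma)+O(1)$ is finite, and the target bound $(C_2k)^{k}$ is exactly the sub-Gaussian $2k$-th moment of a mean-zero random variable with bounded variance; the whole point is therefore to show that $\log L_j(\sigma+it)$ genuinely behaves this way on average. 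Writing $|\log L_j|^{2k}=\big(|\log|L_j||^{2}+|\arg L_j|^{2}\big)^{k}\le 2^{k-1}\big(|\log|L_j||^{2k}+|\arg L_j|^{2k}\big)$, it suffices to prove the stated bound for $\log|L_j(\sigma+it)|=\re\log L_j(\sigma+it)$ and for $\arg L_j(\sigma+it)=\im\log L_j(\sigma+it)$ separately.

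First I would isolate the main term. Set $x=(\log T)^{A}$ for a constant $A$ to be fixed, and let $P(t)$ be the truncation of the prime-power series above to $N(\mathfrak{p}^m)\le x$. The $2k$-th moment of $P$ is computed by the mean value theorem for Dirichlet polynomials: expanding $\frac1T\int_T^{2T}P(t)^{2k}\,dt$ and integrating the oscillatory factors $(N(\mathfrak{a})/N(\mathfrak{b}))^{it}$, the diagonal terms $N(\mathfrak{a})=N(\mathfrak{b})$ contribute $\ll (2k-1)!!\,V^{k}\ll (C_2k)^{k}$, while the off-diagonal terms are individually $O(1/T)$ and, after summing over the at most $x^{2k}$ pairs of ideals involved, contribute a total that is negligible provided $x^{2k}\le T^{1/2}$. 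This is precisely where the hypothesis $k\le (\log T)/(C_3\log\log T)$ enters: it gives $x^{2k}=\exp(2Ak\log\log T)\le T^{2A/C_3}$, so choosing $C_3>4A$ keeps us below $T^{1/2}$. The identical computation applies to the imaginary-part polynomial, yielding the same bound for $\arg L_j$.

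The main obstacle is to control the remainder $\log L_j(\sigma+it)-P(t)$ in the $2k$-th moment, and this is where the zeros of $L_j$ intervene: for $1/2<\sigma<1$ the prime-power series does not converge, and the discrepancy between $\log L_j$ and its truncation is governed by the zeros $\rho=\beta+i\gamma$ of $L_j$ near the point $\sigma+it$, in particular those with $\beta\ge\sigma$, together with the tail of the series. My plan is to use a Selberg-type explicit-formula approximation, in which $\log L_j(\sigma+it)$ is replaced by $P(t)$ plus an error of the shape $\frac{1}{\log x}\sum_{|\gamma-t|\le 1}\log\big|(\sigma+it-\rho)^{-1}\big|+O\big(x^{-(\sigma-1/2)}\log T\big)$, obtained by shifting the contour in a Mellin representation of $\log L_j$ past the nearby zeros. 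Bounding the $2k$-th moment of this error reduces to two inputs: a zero-density estimate for the Hecke $L$-function $L_j$ showing that zeros with $\beta\ge\sigma$ are polynomially rare in $[T,2T]$ (this uses crucially that $\sigma$ is bounded away from $1/2$), and a bound for the measure of those $t$ at which $\sigma+it$ comes abnormally close to a zero, which controls the logarithmic singularities $\log|\sigma+it-\rho|$. I expect the handling of these close approaches---quantifying how the rare, nearby zeros contribute to high moments without producing a factor worse than $(C_2k)^{k}$---to be the technically delicate step. Once the error is shown to have $2k$-th moment $\ll (C_2k)^{k}$, Minkowski's inequality in $L^{2k}$ combined with the Dirichlet-polynomial bound of the previous paragraph completes the proof.
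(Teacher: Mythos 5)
Your proposal targets a statement that this paper never proves: Lemma \ref{MomentsOneL} is imported verbatim from Gonek and Lee \cite{GoLe} (their Lemma 3.2), so the only meaningful comparison is with the proof there, which runs along the same Selberg--Tsang lines that your sketch reproduces in outline (via the machinery of \cite{LLR}): a short Dirichlet polynomial over prime ideal powers carries the main term, its $2k$-th moment is computed by diagonal dominance, and the constraint $k\le \log T/(C_3\log\log T)$ arises exactly as you say, from keeping the $k$-th power of the polynomial shorter than a small power of $T$. That half of your argument is sound: since $\sigma>1/2$ is fixed, $V=\sum_{\mathfrak{p},m}m^{-2}N(\mathfrak{p}^m)^{-2\sigma}$ converges (here $2\sigma>1$), the diagonal gives $\ll (2k-1)!!\,V^k\ll (C_2k)^k$, and the off-diagonal is negligible under your length condition. (For $1<\sigma\le 2$ the lemma is trivial by absolute convergence of the Euler product, so the real content is $1/2<\sigma\le 1$.)

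The genuine gap is the error term, and the specific shape you propose would fail quantitatively, not merely be ``delicate.'' With $x=(\log T)^A$ your error is $\frac{1}{\log x}\sum_{|\gamma-t|\le 1}\log|\sigma+it-\rho|^{-1}+O(x^{-(\sigma-1/2)}\log T)$. But $L_j$ has $\asymp \log T$ zeros with $|\gamma-t|\le 1$, almost all with $\beta$ near $1/2$; each such zero sits at distance at least $\sigma-1/2$ from $\sigma+it$, so it contributes a bounded but typically \emph{positive} amount $\asymp \log\frac{1}{\sigma-1/2}$ when $|\gamma-t|$ is small, and the sum is generically $\asymp c(\sigma)\log T$ for $1/2<\sigma<1$. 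Divided by $\log x\asymp A\log\log T$, your error term is of size $\log T/\log\log T$ for typical $t$, and its $2k$-th moment is of order $(\log T/\log\log T)^{2k}$, which dwarfs $(C_2k)^k$. The correct Selberg-type formula damps the contribution of each zero $\rho=\beta+i\gamma$ by factors of the shape $x^{-(\sigma-\beta)}$ and is valid only for $\sigma\ge\sigma_{x,t}$, a parameter recording zeros lying near $\sigma+it$ and to the right of the critical line; the set where $\sigma_{x,t}>\sigma$ is small by zero-density estimates for Hecke $L$-functions, but bounding the moment of $\log L_j$ \emph{on that exceptional set} by Cauchy--Schwarz requires a $4k$-th moment of $\log L_j$ --- precisely the kind of quantity being proved --- so one must break this circularity, for instance by Tsang's device of treating $\arg L_j$ (bounded jump singularities) and $\log|L_j|$ (negative logarithmic singularities at zeros, requiring Jensen-type control of small values) by separate mechanisms. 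This circularity-breaking step, which is the technical core of the actual proof in \cite{GoLe} and its antecedents, is precisely what your sketch postpones; as written, you have proved the main-term estimate and asserted, rather than established, the error-term bound.
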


\begin{lem}[Lemma 3.3 of \cite{GoLe}]\label{MomentsLogR}
Let $1/2<\sigma\leq 2$ be fixed. There exists a constant $C_4>0$ depending at most on $J$, such that for every positive integer $k$ we have 
\begin{equation}\label{MomentsLogR2}\ex\left(\big| \log \left|E(\sigma, \X)\right|\big |^{2k}\right)\ll (C_4k)^{2k},
\end{equation} 
and for all $1\leq j\leq J$ 
\begin{equation}\label{MomentsOneR}\ex\left(\left| \log L_j(\sigma, \X)\right|^{2k}\right)\ll (C_4k)^{k}.
\end{equation} 
\end{lem}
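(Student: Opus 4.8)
The plan is to prove the two estimates separately, establishing \eqref{MomentsOneR} first and then feeding it into the proof of \eqref{MomentsLogR2}. For \eqref{MomentsOneR} I would expand the logarithm of the Euler product and write $\log L_j(\sigma,\X)=\sum_p Y_p$, where $Y_p:=\sum_{\mathfrak p\mid p}\sum_{m\ge 1}\chi_j(\mathfrak p)^m\X(p)^m/(m\,N(\mathfrak p)^{m\sigma})$ depends only on $\X(p)$. Since the $\X(p)$ are independent and $\ex(\X(p)^m)=0$ for every $m\ge 1$, the $Y_p$ are independent and of mean zero, and $|Y_p|\le B_p:=-\sum_{\mathfrak p\mid p}\log(1-N(\mathfrak p)^{-\sigma})\ll p^{-\sigma}$ almost surely, so that $\sum_p B_p^2\ll\sum_p p^{-2\sigma}=:A_\sigma<\infty$ because $2\sigma>1$. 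A Marcinkiewicz--Zygmund inequality for independent mean-zero variables then gives $\ex|\sum_p Y_p|^{2k}\ll (Ck)^k\,\ex\big[(\sum_p|Y_p|^2)^k\big]$, and since $\sum_p|Y_p|^2\le A_\sigma$ almost surely the right-hand side is $\le (CA_\sigma k)^k=(C_2 k)^k$, with $C_2$ depending only on $\sigma$. (Equivalently one expands the $2k$-th moment and uses $\ex(\X(p)^m)=0$ to check that only the $\asymp k!$ ``diagonal'' pairings survive.)

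For \eqref{MomentsLogR2} I would split according to whether $|E(\sigma,\X)|\ge 1$ or $<1$. On the first event $0\le\log|E|\le\max_j\log|L_j|+O(1)$, coming from $|E|\le(\sum_j|a_j|)\max_j|L_j|$, so this part is $\ll\ex\big[(\max_j|\log|L_j||+O(1))^{2k}\big]\ll\sum_j\ex|\log L_j|^{2k}+O(1)^{2k}\ll (Ck)^k$ by \eqref{MomentsOneR} (using $|\log|L_j||\le|\log L_j|$), which is dominated by $(C_4 k)^{2k}$. On the event $|E|<1$ we have $|\log|E||=\log(1/|E|)$, and the layer-cake formula gives $\ex\big[(\log(1/|E|))^{2k};\,|E|<1\big]=\int_0^\infty\pr(|E|<e^{-\lambda^{1/2k}})\,d\lambda$. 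Hence \eqref{MomentsLogR2} reduces to a small-ball estimate $\pr(|E(\sigma,\X)|\le\epsilon)\ll\epsilon^{c}$ for some fixed $c=c(\sigma,J)>0$: substituting $u=\lambda^{1/2k}$ turns the integral into $(2k)!\,c^{-2k}\ll(C_4 k)^{2k}$ (the range where the small-ball bound is vacuous contributes a harmless $(\log(1/\epsilon_0))^{2k}$).

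The crux is therefore the small-ball bound, and I would obtain it by showing that $\mathbf W:=(\log L_1,\dots,\log L_J)\in\mathbb R^{2J}$ has a bounded density. Its characteristic function $\Phi(\xi)=\ex[\exp(i\sum_j\re(\bar\xi_j\log L_j))]$ factors over primes, because each $\log L_j=\sum_p Y_p^{(j)}$ is a prime-sum; for a split prime the first-order term makes the $p$-factor $\approx J_0(|\langle\xi,v_p\rangle|/p^\sigma)$ with $v_p=(2\,\re\chi_j([\mathfrak p]))_j\in\mathbb R^J$. The decisive structural input is that, since $\chi_1,\dots,\chi_J$ are distinct and together with their conjugates exhaust the character group, the real vectors $\{v_p\}$ span $\mathbb R^J$; combined with Chebotarev equidistribution of the classes $[\mathfrak p]$, this shows that for every direction $\xi\neq 0$ a positive proportion of primes $p\asymp|\xi|^{1/\sigma}$ satisfy $|\langle\xi,v_p\rangle|\asymp p^\sigma$, each contributing a factor bounded by a constant $<1$, whence $|\Phi(\xi)|\ll\exp(-c|\xi|^{1/\sigma}/\log|\xi|)$ and $\mathbf W$ has a bounded density. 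Writing $E=\sum_j a_j e^{W_j}$, the map $\mathbf w\mapsto\sum_j a_j e^{w_j}$ is a submersion onto $\mathbb C$ wherever some $a_j\neq 0$, so the coarea formula gives $\me\{|E|\le\epsilon,\ |\mathbf w|\le M\}\ll e^{CM}\epsilon^2$, while the Gaussian tails $\pr(|W_j|>M)\ll e^{-cM^2}$ supplied by \eqref{MomentsOneR} control the region $|\mathbf w|>M$; balancing at $M\asymp\sqrt{\log(1/\epsilon)}$ yields $\pr(|E|\le\epsilon)\ll\epsilon^{c}$.

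I expect the main obstacle to be precisely this uniform-in-direction decay of $\Phi$: upgrading the qualitative non-degeneracy ``distinct characters $\Rightarrow$ spanning response vectors $v_p$'' to the quantitative statement that \emph{every} unit direction is genuinely damped by a positive proportion of primes near $|\xi|^{1/\sigma}$. A more elementary route---isolating a single split prime and applying a Tur\'an-type inequality on the unit circle---instead leads to a circular negative-moment bound $\ex|E^{(p_0)}|^{-\eta}<\infty$ of exactly the same strength one is trying to prove, which the characteristic-function argument above is designed to bypass.
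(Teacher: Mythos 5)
Your proposal is essentially correct in structure, but it takes a genuinely different route from the paper, which in fact contains no proof of this lemma at all: the paper quotes it verbatim as Lemma 3.3 of \cite{GoLe}, and Gonek and Lee's proof rests on the Borchsenius--Jessen theory \cite{BoJe} of the value distribution of $E(\sigma,\X)$ --- precisely the absolute-continuity/bounded-density statement that the present paper also invokes later at \eqref{BoundPrExceptional}. Your treatment of \eqref{MomentsOneR} (independent mean-zero blocks $Y_p$ with $|Y_p|\ll p^{-\sigma}$, so $\sum_p |Y_p|^2$ is bounded almost surely, then Marcinkiewicz--Zygmund with constant $(Ck)^k$) is correct and standard, and your layer-cake reduction of \eqref{MomentsLogR2} to a small-ball bound $\pr(|E(\sigma,\X)|\le\epsilon)\ll\epsilon^{c}$ is exactly the right mechanism; it correctly explains why the bound degrades from $(C_4k)^k$ to $(C_4k)^{2k}$, since $\int_0^\infty e^{-c\lambda^{1/2k}}\,d\lambda=(2k)!\,c^{-2k}$. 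The difference is that where \cite{GoLe} simply imports the small-ball input from \cite{BoJe}, you re-derive it from scratch via characteristic-function decay; what your route buys is self-containedness, at the cost of redoing a chunk of Jessen--Wintner-type theory.

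Two remarks on the hard part. First, the obstacle you flag as the crux --- upgrading ``distinct characters $\Rightarrow$ spanning vectors $v_p$'' to uniformity in the direction $\xi$ --- is actually easier than you fear: the response vector $v_p$ depends only on the ideal class of $\mathfrak p$, so there are only \emph{finitely many} vectors $v_C$, $C\in \mathrm{Cl}(K)$; linear independence of the characters $\{\chi_j,\overline{\chi_j}\}$ gives $\max_C|\langle u,v_C\rangle|>0$ for every $u\neq 0$, hence $\max_C|\langle u,v_C\rangle|\gg |u|$ by homogeneity and compactness of the unit sphere, and Chebotarev (or Landau's prime ideal theorem in classes) supplies a positive proportion of split primes in the good class in any dyadic range $p\asymp|\xi|^{1/\sigma}$. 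Second, the step you state too quickly is the coarea claim $\me\{|\sum_j a_j e^{w_j}|\le\epsilon,\ |\mathbf w|\le M\}\ll e^{CM}\epsilon^2$: the submersion property alone does not bound the $(2J-2)$-dimensional measure of the level sets inside the ball, and the natural Fubini bound in the variable $w_1$ degenerates when the complementary sub-sum $c=-\sum_{j\ge 2}a_j e^{w_j}$ is itself of size $O(\epsilon)$ (the slice measure is $\ll M\epsilon^2/|c|^2$, which is useless for small $|c|$). This is fixable by induction on $J$ --- the event $|c|\le\epsilon$ is again a small-ball event for $J-1$ terms, with trivial base case since $|a_1e^{w_1}|\ge|a_1|e^{-M}$ --- at the cost of harmless $e^{CM}$ and logarithmic losses, which your balancing $M\asymp\sqrt{\log(1/\epsilon)}$ absorbs; alternatively one can simply cite \cite{BoJe} for the bounded density of $E(\sigma,\X)$, as both \cite{GoLe} and this paper do, and dispense with the coarea argument entirely.
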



\section{Proof of Theorem \ref{AsympLogE}}

We start by showing how to use Lemmas \ref{MomentsLogE} and \ref{MomentsOneL} to control the large values and the logarithmic singularities of $\log|E(\sigma+it, Q)|$. Let $A$ be a suitably large constant and  put $M=A\sqrt{\log\log T}$. We consider the following sets
$$ S_1(T):= \left\{ t\in [T, 2T] : \Lv(\sigma+it) \in (-M, M)^{2J}\right\},$$ 
$$S_2(T):= \left\{ t\in [T, 2T] : \log |E(\sigma+it, Q)|>-M^4\right\}, \text {and } S(T)= S_1(T) \cap S_2(T).$$
Let $k= \lfloor2A\log\log T\rfloor$. Then, it follows from Lemma \ref{MomentsOneL} that
\begin{equation}\label{MeasureS1}
\begin{aligned}
\me([T, 2T]\setminus S_1(T)) & \leq \sum_{j=1}^{J} \me \left\{ t\in [T, 2T] : |\log L_j(\sigma+it)| \ge M \right\}\\
& \leq \sum_{j=1}^{J} \frac{1}{M^{2k}} \int_T^{2T} |\log L_j(\sigma+it)|^{2k} dt\\
& \ll T \left(\frac{C_2 k}{M^2}\right)^{k} \ll \frac{T}{(\log T)^{2A}},
\end{aligned}
\end{equation}
if $A$ is suitably large. 
On the other hand, using Lemma \ref{MomentsLogE} with the same choice of $k$ gives\begin{align*}
 \me([T, 2T]\setminus S_2(T)) &\leq 
\frac{1}{M^{8k}} \int_{T}^{2T} \big|\log |E(\sigma+it, Q)|\big|^{2k} dt \\
&\ll T \left(\frac{C_1k}{M^2}\right)^{4k}\ll \frac{T}{(\log T)^{2A}}.\end{align*}
Therefore we deduce
$$  
\me([T, 2T]\setminus S(T))  \ll \frac{T}{(\log T)^{2A}}.
$$
Combining this bound with Lemma \ref{MomentsLogE}, and using  H\"older's inequality with $r= \lfloor \log\log T\rfloor$ we get
\begin{equation}\label{TrunIntLog}
\begin{aligned}
& \int_{t\in [T, 2T]\setminus S(T)} \log |E(\sigma+it, Q)|dt \\
& \leq \big(\me\{t\in [T, 2T]\setminus S(T)\}\big)^{1-1/2r}\left(\int_{T}^{2T}\big|\log |E(\sigma+it, Q)|\big|^{2r}dt\right)^{1/2r}\\
& \ll \left(\frac{T}{(\log T)^{2A}}\right)^{1-1/2r} \left(T(C_1k)^{4r}\right)^{1/2r} \\
& \ll \frac{T}{(\log T)^{A}}.
\end{aligned}
\end{equation}
We now define for $\tau\in \mathbb{R}$
$$ \Psi_T(\tau):= \frac{1}{T} \me\left\{ T\in S(T): \log |E(\sigma+it, Q)|>\tau\right\},$$
and similarly
$$ \Psr(\tau):= \pr( \X\in \mathcal{S}, \text { and }\log |E(\sigma, \X)|>\tau),$$
where $\mathcal{S}$ is the event $\Lv(\sigma, \X)\in (-M, M)^{2J}$ and $\log|E(\sigma, \X)|>-M^4$. 
We shall deduce Theorem \ref{AsympLogE} from the following result which shows that $\Psi_T(\tau)$ is very close to $\Psr(\tau)$ uniformly in $\tau$.
\begin{pro}\label{CoveringResult}
For $T$ large, we have
$$ \sup_{\tau\in \mathbb{R}} \left| \Psi_T(\tau)- \Psr(\tau)\right| \ll \frac{e^{2M}}{(\log T)^{\sigma/(2J+1)}}.$$
\end{pro}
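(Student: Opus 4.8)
The plan is to write both $\log|E(\sigma+it,Q)|$ and $\log|E(\sigma,\X)|$ as the same explicit function of the respective vectors $\Lv$, and then to deduce the proposition from the box-discrepancy bound of Theorem~\ref{Discrepancy} by covering a super-level set. Define $F\colon\mathbb{R}^{2J}\to\mathbb{R}$ by
$$F(u_1,\dots,u_J,v_1,\dots,v_J)=\log\Big|\sum_{j=1}^{J}a_j\,e^{u_j+iv_j}\Big|,$$
so that $\log|E(\sigma+it,Q)|=F(\Lv(\sigma+it))$ and $\log|E(\sigma,\X)|=F(\Lv(\sigma,\X))$. Setting $c=\max(\tau,-M^4)$, the quantities $\Psi_T(\tau)$ and $\Psr(\tau)$ are precisely the empirical and probabilistic measures of the event $\{\Lv\in R_c\}$, where $R_c=\{u\in(-M,M)^{2J}:F(u)>c\}$. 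Since $S_2(T)$ and $\mathcal{S}$ impose $F>-M^4$, both functions are constant for $\tau\le -M^4$, and since $F\le M+O(1)$ on the cube both vanish once $\tau$ exceeds $M+O(1)$; hence it suffices to bound $|\Psi_T(\tau)-\Psr(\tau)|$ uniformly for $c\in[-M^4,\,M+O(1)]$.

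I would then tile $(-M,M)^{2J}$ by a grid of about $(2M/\delta)^{2J}$ boxes of side $\delta$, to be optimised later, and classify each box as \emph{inner} (where $F>c$ throughout), \emph{outer} (where $F\le c$ throughout), or \emph{boundary} (straddling $\{F=c\}$). The union of inner boxes is contained in $R_c$, while $R_c$ is contained in the union of inner and boundary boxes. Applying Theorem~\ref{Discrepancy} to each box and summing over the at most $(2M/\delta)^{2J}$ disjoint boxes, the discrepancy between $\frac1T\me$ and $\pr$ on any such union is $\ll (M/\delta)^{2J}(\log T)^{-\sigma}$. Consequently
$$|\Psi_T(\tau)-\Psr(\tau)|\ll \pr\big(\Lv(\sigma,\X)\in\mathcal{D}_c\big)+\Big(\tfrac{M}{\delta}\Big)^{2J}\frac{1}{(\log T)^{\sigma}},$$
where $\mathcal{D}_c$ is the union of boundary boxes; here I have also used Theorem~\ref{Discrepancy} once more to replace the empirical measure of $\mathcal{D}_c$ by its probabilistic counterpart, at the cost of another $(M/\delta)^{2J}(\log T)^{-\sigma}$ term.

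The heart of the matter is to bound $\pr(\Lv(\sigma,\X)\in\mathcal{D}_c)$. The gradient estimate
$$|\nabla F(u)|\ll e^{\,M-F(u)}\qquad (u\in(-M,M)^{2J}),$$
valid because $|a_je^{u_j+iv_j}|\le|a_j|e^{M}$ and $|E|=e^{F}$, shows that where the level set is resolved by the grid a boundary box lies in the thin shell $\{\,|F-c|\ll e^{M-c}\delta\,\}$, whereas near the zeros of $\sum_j a_j e^{u_j+iv_j}$ (the logarithmic singularities of $F$), where the grid is too coarse, every point of a boundary box satisfies $|E(\sigma,\X)|\ll e^{M}\delta$. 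To turn these geometric statements into probability bounds I will need an anti-concentration estimate for the random model, namely that $E(\sigma,\X)$ has a bounded density on $\mathbb{C}$, so that $\pr(|E(\sigma,\X)-z|\le\varepsilon)\ll\varepsilon^2$ uniformly in $z$. This gives a density $\ll e^{2c}$ for $\log|E(\sigma,\X)|$ as $c\to-\infty$, so that the shell has probability $\ll e^{M-c}\delta\cdot\min(1,e^{2c})\ll e^{M}\delta$ uniformly in $c\le M+O(1)$, while the near-zero boxes contribute $\ll(e^{M}\delta)^2=e^{2M}\delta^2$. Establishing this density bound from the independence and uniform distribution of the $\X(p)$ is, I expect, the main obstacle: it is the one ingredient that genuinely goes beyond the moment inputs of Lemmas~\ref{MomentsLogE}--\ref{MomentsLogR}, and the small-ball bound $e^{2M}\delta^2$ near the singularities is the source of the $e^{2M}$ factor in the statement.

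Collecting these bounds gives
$$\sup_{\tau\in\mathbb{R}}|\Psi_T(\tau)-\Psr(\tau)|\ll\Big(\frac{M}{\delta}\Big)^{2J}\frac{1}{(\log T)^{\sigma}}+e^{M}\delta+e^{2M}\delta^{2},$$
and I would conclude by choosing $\delta$ to balance the first two terms, i.e.\ $\delta\asymp\big(M^{2J}e^{-M}\big)^{1/(2J+1)}(\log T)^{-\sigma/(2J+1)}$. With this choice the $2J$-dimensional box count is balanced against the single transverse direction of the level set, which is exactly what produces the exponent $\sigma/(2J+1)$, and a short computation shows that each of the three terms is $\ll e^{2M}(\log T)^{-\sigma/(2J+1)}$, the claimed estimate. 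What remains is routine: the uniform gradient bound for $F$, the elementary geometry of the inner/boundary decomposition, and the verification (already indicated) that the empirical boundary measure is majorised by the probabilistic one through Theorem~\ref{Discrepancy}.
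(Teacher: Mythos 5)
Your proposal is correct and is essentially the paper's own argument: the paper likewise covers the relevant region of $\mathbb{R}^{2J}$ by $\asymp (M/\varepsilon)^{2J}$ hypercubes of side $\varepsilon$, applies Theorem~\ref{Discrepancy} to each cube, controls the cubes meeting the level set $\{|\sum_j a_j e^{u_j+iv_j}|=e^{\tau}\}$ by anti-concentration of $E(\sigma,\mathbb{X})$, and optimizes $\varepsilon\asymp(\log T)^{-\sigma/(2J+1)}$; the only organizational difference is that the paper splits into cases on $\tau$ and sandwiches the sub-/super-level sets $\mathcal{U}_J$ and $\mathcal{V}_J$ between unions of cubes, rather than classifying the boxes of one fixed grid as inner/outer/boundary, and it uses the Lipschitz bound $O(\varepsilon e^{M})$ on $\sum_j a_j e^{u_j+iv_j}$ directly instead of your gradient bound on $F$. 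The density input you single out as the main obstacle is not proved afresh in the paper either: it is imported from Borchsenius--Jessen \cite{BoJe} (see the discussion on p.~315 of \cite{GoLe}), namely that $E(\sigma,\mathbb{X})$ is absolutely continuous, which supplies the small-ball and thin-annulus bounds $\ll\varepsilon e^{2M}$ playing the role of your bounded-density hypothesis.
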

\begin{proof} 
We let $0<\varepsilon= \varepsilon(T)\leq e^{-10M}$, be a small parameter to be chosen later. 
We shall consider three cases depending on the size of $\tau$. 

\smallskip

\noindent \textbf{Case 1}: $\tau\leq  -M^4.$ In this case, it follows from the definitions of the set $S(T)$ and the event $\mathcal{S}$ that
$$
  \Psi_T(\tau)= \frac{1}{T} \me\big\{ t\in [T, 2T]: \Lv(\sigma+it) \in (-M, M)^{2J}\big\},
 $$ 
 and 
 $$ \Psr(\tau)=  \pr\big(\Lv(\sigma, \X) \in (-M, M)^{2J}\big), $$
 and hence the desired estimate follows from Theorem \ref{Discrepancy}.
 
 \smallskip
 
\noindent \textbf{Case 2}: $-M^4< \tau\leq \log(2\varepsilon)+ 2M.$

In this case we have 
\begin{equation}\label{SecondCase}
  \Psi_T(\tau)= \frac{1}{T} \me\big\{ t\in [T, 2T]: \Lv(\sigma+it) \in (-M, M)^{2J}\setminus \mathcal{U}_J(e^{\tau}, M)\big\},
\end{equation}
 where $\mathcal{U}_J(y, M)$ is the bounded subset of $\mathbb{R}^{2J}$ defined by
\begin{align*}
\mathcal{U}_J(y, M)
:= \Big\{ (u_1, \dots, u_J, v_1, \dots, v_J) \in \mathbb{R}^{2J} : \  & |u_j|, |v_j| <M \text{ for all } 1\le j\le J,\\
& \text{ and } \big|\sum_{j=1}^J a_j e^{u_j+iv_j}\big|\leq y \Big\}.
\end{align*}
 We cover $\mathcal{U}_J(e^{\tau}, M)$ with $K$ hypercubes $\mathcal{B}_k(\tau)$ (of dimension $2J$)
 with non-empty intersection with $\mathcal{U}_J(e^{\tau}, M)$, and with
sides of length $\varepsilon$. The number of such hypercubes is 
$$ K \asymp \frac{\textup{Vol}(\mathcal U_J(e^{\tau}, M))}{\varepsilon^{2J}} \ll \left(\frac{M}{\varepsilon}\right)^{2J}.
$$
Now, let $1\leq k\leq K$ and $(u_1, \dots, u_J, v_1, \dots, v_J) \in \mathcal{B}_k(\tau) \cap \mathcal{U}_J(e^{\tau}, M)$ (recall that this intersection is non-empty by construction). Then, for any $(x_1, \dots, x_J, y_1, \dots, y_J)\in \mathcal{B}_k(\tau)$ we have $|x_j-u_j|\leq \varepsilon$ and 
$|y_j-v_j|\leq \varepsilon$ for all $1\leq j\leq J$. Hence, we deduce that $|x_j|, |y_j|<2M$ for all $1\leq j\leq J$ and 
$$ \big|\sum_{j=1}^J a_j e^{x_j+iy_j}\big|=\big|\sum_{j=1}^J a_j e^{u_j+iv_j}\big| +O(\varepsilon e^M) \leq C_5\varepsilon e^{2M}$$
for some positive constant $C_5$ since $e^{\tau} \leq 2\varepsilon e^{2M}$ by our assumption. Therefore, we have shown that
\begin{equation}\label{UpperLowerBoxes}
\mathcal{U}_J(e^{\tau}, M) \subset \left(\bigcup_{k \le K} \mathcal{B}_k(\tau) \right) \subset \mathcal{U}_J(C_5\varepsilon e^{2M}, 2M).
\end{equation}
By Theorem \ref{Discrepancy} we thus deduce that 
\begin{equation}\label{BoundMesExceptional}
\begin{aligned}
& \frac{1}{T} \me\big\{ t\in [T, 2T]: \Lv(\sigma+it) \in \mathcal{U}_J(e^{\tau}, M)\big\}\\
& \leq 
\sum_{k=1}^K \frac{1}{T} \me\big\{ t\in [T, 2T]: \Lv(\sigma+it) \in \mathcal{B}_k(\tau)\big\}\\
&= \sum_{k=1}^K\pr\left(\Lv(\sigma, \X)\in \mathcal{B}_k(\tau)\right) + O\left(\frac{K}{(\log T)^{\sigma}}\right)\\
&\leq \pr\left(\Lv(\sigma, \X)\in \mathcal{U}_J(C_5\varepsilon e^{2M}, 2M)\right) + O\left(\frac{(\log\log T)^J}{\varepsilon^{2J}(\log T)^{\sigma}}\right).
\end{aligned}
\end{equation}
Moreover, it follows from the work of Borchsenius and Jessen \cite{BoJe} (see for example page 315 of \cite{GoLe}) that $E(\sigma, \X)$ is an absolutely continuous random variable. This shows that 
\begin{equation}\label{BoundPrExceptional}
\pr\left(\Lv(\sigma, \X)\in \mathcal{U}_J(C_5\varepsilon e^{2M}, 2M)\right) \leq \pr\left(|E(\sigma, \X)| \leq C_5\varepsilon e^{2M}\right)\ll \varepsilon e^{2M}.
\end{equation}
Combining this bound with \eqref{SecondCase} and \eqref{BoundMesExceptional} gives
$$ \Psi_T(\tau)= \frac{1}{T} \me\big\{ t\in [T, 2T]: \Lv(\sigma+it) \in (-M, M)^{2J}\big\} + O\left(\varepsilon e^{2M}+ \frac{(\log\log T)^J}{\varepsilon^{2J}(\log T)^{\sigma}}\right).$$
Similarly, it follows from \eqref{BoundPrExceptional} that
$$ \Psr(\tau)= \pr\left(\Lv(\sigma, \X) \in (-M, M)^{2J}\right) +O(\varepsilon e^{2M}).$$
The desired bound on the discrepancy $|\Psi_T(\tau)- \Psr(\tau)|$ then follows from Theorem \ref{Discrepancy} by choosing $\varepsilon= (\log T)^{-\sigma/(2J+1)}$.

\smallskip

\noindent \textbf{Case 3}: $\tau> \log(2\varepsilon)+ 2M.$ 

In this case we have 
\begin{equation}\label{ThirdCase}
  \Psi_T(\tau)= \frac{1}{T} \me\big\{ t\in [T, 2T]: \Lv(\sigma+it) \in  \mathcal{V}_J(e^{\tau}, M)\big\},
\end{equation}
 where $\mathcal{V}_J(y, M)$ is the bounded subset of $\mathbb{R}^{2J}$ defined by
\begin{align*}
\mathcal{V}_J(y, M)
:= \Big\{ (u_1, \dots, u_J, v_1, \dots, v_J) \in \mathbb{R}^{2J} : \  & |u_j|, |v_j| <M \text{ for all } 1\le j\le J,\\
& \text{ and } \big|\sum_{j=1}^J a_j e^{u_j+iv_j}\big| > y \Big\}.
\end{align*}
Similarly as before, we cover $\mathcal{V}_J(e^{\tau}, M)$ with $\widetilde{K}(\tau)$ hypercubes $\widetilde{\mathcal{B}}_k(\tau)$
 with non-empty intersection with $\mathcal{V}_J(e^{\tau}, M)$, and with
sides of length $\varepsilon$. The number of such hypercubes is 
$$ \widetilde{K}(\tau) \asymp \frac{\textup{Vol}(\mathcal V_J(e^{\tau}, M))}{\varepsilon^{2J}} \ll \left(\frac{M}{\varepsilon}\right)^{2J}.
$$
Now, let $1\leq k\leq \widetilde{K}(\tau)$ and $(u_1, \dots, u_J, v_1, \dots, v_J) \in \widetilde{\mathcal{B}}_k(\tau) \cap \mathcal{V}_J(e^{\tau}, M)$. Then, for any $(x_1, \dots, x_J, y_1, \dots, y_J)\in \widetilde{\mathcal{B}}_k(\tau)$ we have $|x_j-u_j|\leq \varepsilon$ and 
$|y_j-v_j|\leq \varepsilon$ for all $1\leq j\leq J$. Hence, we deduce that $|x_j|, |y_j|<2M$ for all $1\leq j\leq J$ and 
$$ \big|\sum_{j=1}^J a_j e^{x_j+iy_j}\big|=\big|\sum_{j=1}^J a_j e^{u_j+iv_j}\big| +O(\varepsilon e^M)> e^{\tau}- \varepsilon e^{2M}$$
if $T$ is sufficiently large, since $e^{\tau}> 2\varepsilon e^{2M}$ by our assumption. Therefore, we have shown that
\begin{equation}\label{UpperLowerBoxes2}
\mathcal{V}_J(e^{\tau}, M) \subset \left(\bigcup_{k \le \widetilde{K}(\tau)} \widetilde{\mathcal{B}}_k(\tau) \right) \subset \mathcal{V}_J(e^{\tau}-\varepsilon e^{2M}, 2M).
\end{equation}
Thus, it follows from Theorem \ref{Discrepancy} that
\begin{equation}\label{UpperBThirdCase}
\begin{aligned}
\Psi_T(\tau)& \leq 
\sum_{k=1}^{\widetilde{K}(\tau)} \frac{1}{T} \me\big\{ t\in [T, 2T]: \Lv(\sigma+it) \in \widetilde{\mathcal{B}}_k(\tau)\big\}\\
&= \sum_{k=1}^{\widetilde{K}(\tau)}\pr\left(\Lv(\sigma, \X)\in \widetilde{\mathcal{B}}_k(\tau)\right) + O\left(\frac{\widetilde{K}(\tau)}{(\log T)^{\sigma}}\right)\\
&\leq \pr\left(\Lv(\sigma, \X)\in \mathcal{V}_J(e^{\tau}-\varepsilon e^{2M}, 2M)\right) + O\left(\frac{(\log\log T)^J}{\varepsilon^{2J}(\log T)^{\sigma}}\right)\\
& = \Psr(\tau)+ O\left(\pr\left(\Lv(\sigma, \X) \notin (-M, M)^{2J}\right) +\varepsilon e^{2M}
+ \frac{(\log\log T)^J}{\varepsilon^{2J}(\log T)^{\sigma}}\right),
\end{aligned}
\end{equation}
where in the last estimate we have used that
$$ \pr\left(e^{\tau}-\varepsilon e^{2M}< |E(\sigma, \X)|\leq e^{\tau}\right) \ll \varepsilon e^{2M}$$
since $E(\sigma, \X)$ is an absolutely continuous random variable. Now, by Lemma \ref{MomentsLogR} we have 
\begin{align*} \pr\left(\Lv(\sigma, \X) \notin (-M, M)^{2J}\right) &\leq \sum_{j=1}^J \pr(\log |L_j(\sigma, \X)|\geq M) \\
& \leq \sum_{j=1}^{J} \frac{\ex\left( |\log L_j(\sigma, \X)|^{2k}\right)}{M^{2k}} \ll  \left(\frac{C_4 k}{M^2}\right)^{k} \ll \frac{1}{(\log T)^{2A}},
\end{align*}
if $A$ is suitably large. Thus, we have shown that 
\begin{equation}\label{UB3}\Psi_T(\tau) \leq \Psr(\tau)+ O\left(\varepsilon e^{2M}
+ \frac{(\log\log T)^J}{\varepsilon^{2J}(\log T)^{\sigma}}\right).
\end{equation}

We now proceed to prove the corresponding lower bound. Let $\tau_1$ be such that $e^{\tau}=e^{\tau_1}-\varepsilon e^{2M}$. Then, it follows from \eqref{UpperLowerBoxes2} and Theorem \ref{Discrepancy} that
\begin{equation}\label{LowerBThirdCase}
\begin{aligned}
& \frac{1}{T} \me\big\{ t\in [T, 2T]: \Lv(\sigma+it) \in \mathcal{V}_J(e^{\tau}, 2M)\big\} \\
& \geq 
\sum_{k=1}^{\widetilde{K}(\tau_1)} \frac{1}{T} \me\big\{ t\in [T, 2T]: \Lv(\sigma+it) \in \widetilde{\mathcal{B}}_k(\tau_1)\big\}\\
&= \sum_{k=1}^{\widetilde{K}(\tau_1)}\pr\left(\Lv(\sigma, \X)\in \widetilde{\mathcal{B}}_k(\tau_1)\right) + O\left(\frac{\widetilde{K}(\tau_1)}{(\log T)^{\sigma}}\right)\\
&\geq \Psr(\tau_1) + O\left(\frac{(\log\log T)^J}{\varepsilon^{2J}(\log T)^{\sigma}}\right).\\
\end{aligned}
\end{equation}
Moreover, by \eqref{MeasureS1} we have 
$$ \frac{1}{T} \me\big\{ t\in [T, 2T]: \Lv(\sigma+it) \in \mathcal{V}_J(e^{\tau}, 2M)\big\}= \Psi_T(\tau) + O\left(\frac{T}{(\log T)^{2A}}\right).$$
Finally, we use that $E(\sigma, \X)$ is an absolutely continuous random variable to deduce that 
$$ \Psr(\tau_1)= \Psr(\tau) + O\left(\pr\left(e^{\tau}< |E(\sigma, \X)|\leq e^{\tau}+\varepsilon e^{2M}\right)\right)= \Psr(\tau)+O\left(\varepsilon e^{2M}\right).$$
Inserting these estimates in \eqref{LowerBThirdCase} yields
\begin{equation}\label{LB3}
\Psi_T(\tau) \geq \Psr(\tau)+ O\left(\varepsilon e^{2M}
+ \frac{(\log\log T)^J}{\varepsilon^{2J}(\log T)^{\sigma}}\right).
\end{equation}
The desired result follows by combining \eqref{UB3} and \eqref{LB3} and choosing $\varepsilon= (\log T)^{-\sigma/(2J+1)}$.

\end{proof}

\begin{proof}[Proof of Theorem \ref{AsympLogE}]
We consider the following integral 
$$
 \int_{-M^4}^{M^4}\Psi_T(\tau)d\tau = \int_{-M^4}^{M^4} \frac{1}{T} \int_{\substack{ t\in S(T)\\ \log |E(\sigma+it, Q)|>\tau}} dt= \frac{1}{T} \int_{t\in S(T)} (\log|E(\sigma+it, Q)|+M^4)dt,$$
 where the last equality follows since for all $t\in S(T)$ we have $\max_j|L_j(\sigma+it)|\leq M$ and hence $\log|E(\sigma+it, Q)| = \log| \sum_{j=1}^J a_j L_j(\sigma+it)|\leq M^4$  if $T$ is suitably large. Combining this identity with \eqref{TrunIntLog} and using that $\me(S(T))=T\Psi_T(-M^4)$ we obtain
\begin{equation}\label{MainApprox}
\frac{1}{T}\int_T^{2T}  \log |E(\sigma+it, Q)|dt =  \int_{-M^4}^{M^4}\Psi_T(\tau)d\tau -M^4 \Psi_T(-M^4) + O\left(\frac{1}{(\log T)^A}\right).
\end{equation}

We now repeat the exact same argument but with the random model $E(\sigma, \X)$ instead of the Epstein zeta function. 
Using the same argument leading to \eqref{TrunIntLog} but with Lemma \ref{MomentsLogR} instead of Lemmas \ref{MomentsLogE} and \ref{MomentsOneL}, we deduce similarly that 
$$\ex\left(\log |E(\sigma, \X)|\right)= \ex\left(\mathbf{1}_{\mathcal{S}} \cdot \log |E(\sigma, \X)|\right)+ O\left(\frac{1}{(\log T)^A}\right),$$
where $\mathbf{1}_{\mathcal{S}}$ is the indicator function of $\mathcal{S}$. 
Therefore, reproducing the argument leading to \eqref{MainApprox} we obtain
\begin{equation}\label{MainApprox2}
\ex\left(\log |E(\sigma, \X)|\right)=  \int_{-M^4}^{M^4}\Psr(\tau)d\tau -M^4 \Psr(-M^4) + O\left(\frac{1}{(\log T)^A}\right).
\end{equation}
Finally, it follows from Proposition \ref{CoveringResult} that
$$ \int_{-M^4}^{M^4}\Psi_T(\tau)d\tau -M^4 \Psi_T(-M^4) - \left(\int_{-M^4}^{M^4}\Psr(\tau)d\tau -M^4 \Psr(-M^4)\right) \ll \frac{M^4e^{2M}}{(\log T)^{\sigma/(2J+1)}}.
$$
Combining this bound with \eqref{MainApprox} and \eqref{MainApprox2} completes the proof.
\end{proof}


\end{document}